\documentclass[reqno, 12pt]{amsart}
\usepackage{enumerate}
\linespread{1.1}
\setlength{\topmargin}{-0.5cm}
\setlength{\oddsidemargin}{0.5cm}\setlength{\evensidemargin}{0.5cm}
\setlength{\textwidth}{15.5truecm}\setlength{\textheight}{23truecm}
\usepackage{amssymb,url,color}
\usepackage{hyperref}
\usepackage{amsmath,amsthm}

\allowdisplaybreaks[4]

\newtheorem{thm}{Theorem}[section]

\newtheorem{lem}{Lemma}[section]
\newtheorem{rem}{Remark}[section]
\newtheorem{exa}{Example}[section]
\theoremstyle{Problem}

\theoremstyle{definition}
\newtheorem{defn}{Definition}[section]
\numberwithin{equation}{section}
\newcommand{\pp}{\mathbb{P}}

\newcommand{\ee}{\mathbb{E}}

\newcommand{\FF}{\mathcal{F}}

\newcommand{\rr}{\mathbb{R}}
\newcommand{\ii}{\mathbb{I}}

\def\beq{\begin{equation}}
\def\deq{\end{equation}}
\setlength{\parskip}{0.5em}
\def\dsp{\displaystyle}
\allowdisplaybreaks 

\bfseries\rmfamily 

\begin{document}
\title[MDP for the chi-square statistics]
{Moderate deviation principle for the chi-square statistics}
\thanks{This work is supported by National Natural Science Foundation of China (NSFC-11971154).}

\author[Z. H. Yu]{Zhenhong Yu}
\address[Z. H. Yu]{School of Mathematics and Statistics, Henan Normal University, Henan Province, 453007, China.} \email{\href{mailto: Z. H. Yu
<zhenhongyu2022@126.com>}{zhenhongyu2022@126.com}}

\author[Y. Miao]{Yu Miao}
\address[Y. Miao]{School of Mathematics and Statistics, Henan Normal University, Henan Province, 453007, China.} \email{\href{mailto: Y. Miao
<yumiao728@gmail.com>}{yumiao728@gmail.com}; \href{mailto: Y. Miao <yumiao728@126.com>}{yumiao728@126.com}}

\begin{abstract}
In the present paper, we consider the Pearson chi-square statistic defined on a finite alphabet which is assumed to dynamically vary as the sample size increases, and establish its moderate deviation principle.
\end{abstract}

\keywords{Pearson chi-square statistic, moderate deviation principle.}
\subjclass[2020]{60F10, 62G30, 62G20}

\maketitle
\section{Introduction}
The Pearson chi-square statistic stands as one of the most renowned and significant contributions to statistical science, maintaining a pivotal role in statistical applications since its initial introduction in Pearson's seminal work on randomness testing (see \cite{Pearson}).
The Pearson chi-square goodness-of-fit test inherently presumes that the support of the target discrete distribution remains invariant (regardless of being finite or infinite) and is unaltered by the sampling procedure.
However, this conventional assumption often proves inadequate for contemporary big-data applications characterized by intricate, adaptive sampling mechanisms. In such scenarios, the statistical testing framework would be more appropriately modeled using triangular arrays of discrete distributions, where both the finite supports and their dimensionality evolve with sample size.

For every $n\ge 1$, let $\{X_{k,n}, 1\le k\le  n\}$ be a sequence of independent and identically distributed random variables with the same distribution as $X_{1,n}$, where
$$
\pp(X_{1,n}=i)=p_n(i), \ \  \ i=1, 2, \dots, m_n.
$$
Recall that the standard Pearson chi-square statistic is defined as
\beq\label{chi}
\chi_n^2=n\sum_{i=1}^{m_n}\frac{(\hat p_n(i)-p_n(i))^2}{p_n(i)},
\deq
where the empirical frequencies $\hat p_n(i)$ are
$$
\hat p_n(i)=\frac{1}{n}\sum_{k=1}^n\ii\{X_{k,n}=i\}, \ \  \ i=1, 2, \dots, m_n.
$$

There are many publications dealing with the asymptotic properties of both the Pearson statistic and its various modifications.
Tumanyan \cite{Tumanyan54, Tumanyan56} proved the asymptotic normality of $\chi^2_n$ under the assumption $\dsp\min_{1\le i\le m_n}np_n(i)\to\infty$, which in the case of the uniform distribution is equivalent to $n/m_n\to\infty$. Gihman \cite{Gihman56} generalized Tumanian's results by considering the case that $\dsp\sum_{i=1}^{m_n}p_n(i)=\alpha_0\le 1$, $\dsp\max_{1\le i\le m_n}p_n(i)\to0$ and $\dsp\min_{1\le i\le m_n}np_n(i)\to\infty$, where $\alpha_0$ depends on $n$. Some theorem were given for sequences of series of certain statistics similar in structure to $\chi^2$, which show that these sequences converge to corresponding continuous Markov processes.
Holst \cite{Holst72} studied the asymptotic normality of $\chi^2_n$ under the regime $n/m_n\to\lambda\in(0,\infty)$ and $\dsp\max_{1\le i\le m_n}p_n(i)<\beta/n$ for some $\beta>0$.
 Morris \cite{Morris75} gave the asymptotic normality of $\chi^2_n$ under the assumptions $\dsp\min_{1\le i\le m_n}np_n(i)>\varepsilon>0$ for all $n\ge 1$, $\dsp\max_{1\le i\le m_n}p_n(i)\to0$ and some ``uniform asymptotically negligible" condition.
 Ermakov \cite{Ermakov98} proved that a sequence of chi-square tests is
asymptotically minimax if $m_n=o(n^2)$ as $n\to\infty$.
Kruglov \cite{Kruglov01} obtained some limit theorems for some functionals of the Pearson statistic constructed from the polynomial distribution under the conditions that $\dsp \inf_n\{n\min_{1\le i\le m_n}p_n(i)\}>0$, $\dsp n\min_{i\in W_n}p_n(i)\to \infty$, $N_n/m_n\to 1$ as $n\to\infty$, where $N_n$ is the number of elements in the set $W_n\subset\{1,2,\cdots m_n\}$.
Gy\"{o}rfi \cite{G} discussed Chernoff-type large deviation results for $\chi^2$ divergence errors on partitions. In contrast to the total variation and the $I$-divergence, the $\chi^2$-divergence has an unconventional large deviation rate.
Rempa{\l}a and Weso{\l}owski \cite{R-W} established the distributional limit of the Pearson chi-square statistic when the number of classes $m_n$ increases with the sample size $n$ and $n/\sqrt{m_n}\to\lambda$. Under mild moment conditions, the limit is Gaussian for $\lambda=\infty$, Poisson for finite $\lambda>0$, and degenerate for $\lambda=0$.

Based on the above works, in the present paper, we shall continue to study the asymptotic behaviors for the Pearson chi-square statistic defined in (\ref{chi}), and establish the moderate deviation principle of the
chi-square statistic.

\section{Main results}
The following theorem is our main result.
\begin{thm}\label{thm1}
For every $n\ge 1$,
let $\{X_{k,n}, 1\le k\le n\}$ be a sequence of independent and identically distributed random variables with  common nonuniform distribution $\{p_n(i), 1\le i\le m_n\}$, i.e., for any $n\ge1$ and
$1\le k\le n$,
$$
\pp(X_{1,n}=i)=p_n(i), \ \  \ i=1,2,\dots,m_n.
$$
Then for any $r>0$, we have
$$
\lim_{n\to\infty}\frac{1}{b_n^2}\log \pp\left(\frac{1}{b_n\sqrt{2m_n}}\left(\chi_n^2-\ee\chi_n^2\right)>r\right)=-\frac{r^2}{2}
$$
where the sequence $m_n$  and the moderation scale $\{b_n, n\ge 1\}$ are sequences of positive numbers satisfying
\beq\label{con}
\frac{b_n^2}{\log n} \to \infty,\ \ \frac{m_n}{b_n^2}\to \infty,\ \ \frac{n m_n p_{min}^2}{b_n^4}\to\infty,\ \ p_{min}:=\min_{1\le i\le m_n}p_{n}(i).
\deq
\end{thm}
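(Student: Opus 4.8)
My plan is to reduce the statement to the convergence of a normalized cumulant generating function and then to invoke the Gärtner--Ellis theorem. First I would put the statistic in vector form: set $Y_{k,i}=\ii\{X_{k,n}=i\}-p_n(i)$ and encode the $k$-th observation as $v(X_{k,n})=(Y_{k,1}/\sqrt{p_n(1)},\dots,Y_{k,m_n}/\sqrt{p_n(m_n)})\in\rr^{m_n}$. These vectors are i.i.d.\ and centered; since $\sum_i Y_{k,i}=0$ they are orthogonal to $q_n:=(\sqrt{p_n(1)},\dots,\sqrt{p_n(m_n)})^{\top}$, and their common covariance is the rank-$(m_n-1)$ orthogonal projection $\Sigma_n=I_{m_n}-q_nq_n^{\top}$ (note $\|q_n\|^2=1$). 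Expanding the square in (\ref{chi}) yields the identity $\chi_n^2=\tfrac1n\|T_n\|^2$ with $T_n:=\sum_{k=1}^n v(X_{k,n})$, whence $\ee\chi_n^2=\tfrac1n\ee\|T_n\|^2=\operatorname{tr}\Sigma_n=m_n-1$. Writing $Z_n:=(\chi_n^2-\ee\chi_n^2)/(b_n\sqrt{2m_n})$ and $\theta_n:=\lambda b_n/\sqrt{2m_n}$, I would then prove
\[
\Lambda_n(\lambda):=\frac1{b_n^2}\log\ee\exp\!\big(\lambda b_n^2 Z_n\big)=\frac1{b_n^2}\log\ee\exp\!\big(\theta_n(\chi_n^2-\ee\chi_n^2)\big)\longrightarrow\frac{\lambda^2}{2},
\]
and conclude by Gärtner--Ellis, the rate function being $\sup_\lambda(\lambda r-\lambda^2/2)=r^2/2$. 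For the upper tail only $\lambda>0$ is needed, via the Chernoff bound $\tfrac1{b_n^2}\log\pp(Z_n>r)\le-\lambda r+\Lambda_n(\lambda)$ optimized at $\lambda=r$; I would obtain the matching lower bound from the smoothness of the limit by a standard change-of-measure (tilting) argument.

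The engine for computing $\Lambda_n$ would be a Gaussian linearization that decouples the i.i.d.\ product. For $\theta_n>0$, introducing $G\sim\mathcal N(0,I_{m_n})$ independent of the sample and $s_n:=\sqrt{2\theta_n/n}$,
\[
\ee e^{\theta_n\chi_n^2}=\ee e^{(\theta_n/n)\|T_n\|^2}=\ee_G\Big(\ee_X e^{s_n G^{\top}v(X_{1,n})}\Big)^{n}=\ee_G\exp\!\big(n\log\psi_n(G)\big),
\]
where $\psi_n(g)=\ee_X\exp(s_n g^{\top}v(X_{1,n}))$. Because $\ee v=0$ and $ns_n^2=2\theta_n$, a Taylor expansion gives $n\log\psi_n(g)=\theta_n\,g^{\top}\Sigma_n g+R_n(g)$, with $R_n(g)$ collecting the third- and higher-order cumulants of $g^{\top}v(X_{1,n})$. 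Replacing $n\log\psi_n(G)$ by its quadratic part and using $\ee_G e^{\theta_n G^{\top}\Sigma_n G}=(1-2\theta_n)^{-(m_n-1)/2}$ (legitimate since $\theta_n\to0$), the exponent expands as
\[
-\tfrac{m_n-1}{2}\log(1-2\theta_n)=(m_n-1)\theta_n+(m_n-1)\theta_n^2+O(m_n\theta_n^3).
\]
The linear term equals $\theta_n\ee\chi_n^2$ and is removed by the centering, the quadratic term is $(m_n-1)\theta_n^2=\tfrac12\lambda^2 b_n^2(1+o(1))$, and $O(m_n\theta_n^3)=O(b_n^3/\sqrt{m_n})=o(b_n^2)$; dividing by $b_n^2$ produces exactly $\lambda^2/2$. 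Equivalently, one may split $\chi_n^2-\ee\chi_n^2$ into the diagonal sum $\tfrac1n\sum_k(\|v(X_{k,n})\|^2-(m_n-1))$ and the degenerate $U$-statistic $\tfrac1n\sum_{k\ne l}v(X_{k,n})^{\top}v(X_{l,n})$, recognize the latter as a multiple of the terminal value of the martingale with increments $\xi_L=(\sum_{k<L}v(X_{k,n}))^{\top}v(X_{L,n})$ and predictable quadratic variation $\sum_{L}\|\sum_{k<L}v(X_{k,n})\|^2$, and apply a martingale moderate deviation theorem; the two routes are interchangeable.

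The hard part, which I expect to be the main obstacle, is showing that $R_n$ together with the Gaussian-tail region of $G$ does not affect the limit, and this is exactly where (\ref{con}) is consumed. The large values of $g^{\top}v(X_{1,n})$ originate in the rare cells, where $\|v\|$ is of order $p_{min}^{-1/2}$, and the leading correction is the third cumulant. In the martingale formulation this manifests transparently through the increments: on the super-exponentially likely event $\|\sum_{k<L}v(X_{k,n})\|\lesssim\sqrt{n m_n}$ one has $\mu_n|\xi_L|\lesssim b_n/\sqrt{np_{min}}$ with $\mu_n=2\theta_n/n$, and this tends to $0$ because the third condition in (\ref{con}) gives, using $m_np_{min}\le1$,
\[
\frac{np_{min}}{b_n^2}\ge\frac{n m_n p_{min}^2}{b_n^2}=b_n^2\cdot\frac{n m_n p_{min}^2}{b_n^4}\longrightarrow\infty,
\]
so the jumps are uniformly negligible and the exponential Lindeberg condition holds. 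The second condition $m_n/b_n^2\to\infty$ forces $\theta_n\to0$, which is what makes the quadratic term dominate and the $O(\theta_n^3)$ contribution vanish, while the first condition $b_n^2/\log n\to\infty$ renders super-exponentially negligible (at speed $b_n^2$) the union-bound and truncation costs over the $n$ summands and $m_n$ cells needed both to justify the Taylor expansion of $\log\psi_n$ on the bulk of $G$ and to upgrade the convergence of the normalized predictable quadratic variation $\tfrac2{n^2m_n}\sum_L\|\sum_{k<L}v(X_{k,n})\|^2\to1$ to super-exponential concentration.

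Once $\Lambda_n(\lambda)\to\lambda^2/2$ is established on a neighbourhood of $\lambda=r$, the Gärtner--Ellis theorem would yield the moderate deviation upper and lower bounds, and in particular
\[
\lim_{n\to\infty}\frac1{b_n^2}\log\pp(Z_n>r)=-\frac{r^2}{2},
\]
which is the assertion of Theorem \ref{thm1}.
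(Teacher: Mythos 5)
Your overall architecture is sound, and the ``interchangeable'' alternative you mention in passing --- splitting $\chi_n^2-\ee\chi_n^2$ into the diagonal i.i.d.\ sum $\tfrac1n\sum_k(\|v(X_{k,n})\|^2-(m_n-1))$ and the degenerate order-two $U$-statistic viewed as the terminal value of a martingale, then applying a martingale MDP with an exponential Lindeberg condition and super-exponential convergence of the predictable quadratic variation --- is exactly the paper's proof (Puhalskii's theorem, Lemma \ref{lem1}, plus Hoeffding for the diagonal part and for the individual increments). Your primary route, G\"artner--Ellis after the Gaussian decoupling $\ee e^{(\theta_n/n)\|T_n\|^2}=\ee_G\psi_n(G)^n$, is genuinely different; the bookkeeping with $(1-2\theta_n)^{-(m_n-1)/2}$ correctly reproduces the centering and the limit $\lambda^2/2$, and your identification of where each condition in (\ref{con}) enters is accurate.

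The genuine gap is that, in both formulations, the step that constitutes essentially the entire proof is asserted rather than carried out. In the G\"artner--Ellis route, ``replacing $n\log\psi_n(G)$ by its quadratic part'' requires controlling the remainder $R_n(G)$ to precision $o(b_n^2)$ uniformly over a region of $\rr^{m_n}$ carrying all but $e^{-Cb_n^2}$ of the tilted Gaussian mass, in a dimension $m_n\to\infty$ where $\|G\|^2\approx m_n$ and $|g^{\top}v|$ can reach order $\sqrt{m_n/p_{min}}$; the worst-case bound on $s_n|G^{\top}v(X_{1,n})|$ is not $o(1)$ under (\ref{con}) alone, so a nontrivial restriction of $G$ to a good set (and a separate treatment of its complement) is unavoidable, and you supply no mechanism for either. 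In the martingale route the same difficulty reappears as your unproved claim that the event $\|\sum_{k<L}v(X_{k,n})\|\lesssim\sqrt{nm_n}$ is super-exponentially likely at speed $b_n^2$, equivalently that $\tfrac{2}{n^2m_n}\sum_{L}\|\sum_{k<L}v(X_{k,n})\|^2\to 1$ super-exponentially fast (the paper's claims (\ref{p21})--(\ref{p22})). This is a concentration statement for a degenerate order-two $U$-statistic with a kernel of size $p_{min}^{-1}$, and it is precisely here that the paper must invoke the Houdr\'e--Reynaud-Bouret exponential inequality (Lemma \ref{lem-hr}), estimate the parameters $A,B,C,D$, and tune the deviation level $u_n=b_n^2l_n$ so that $\Delta_n=o(nm_n)$. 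Without naming a tool of comparable strength for this step, your plan does not close.
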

\begin{rem}
Since $p_{min}\le m_n^{-1}$, the condition $\dsp\frac{n m_n p_{min}^2}{b_n^4}\to\infty$ implies $n/(m_nb_n^4)\to\infty$ and $np_{min}/b_n^4\to \infty$. Hence Theorem \ref{thm1} studies the case $n/m_n\to\infty$ and $np_{min}\to\infty$.
\end{rem}

\begin{exa} Let $\alpha\in(0,1)$ and set $p_n(i)=(C_\alpha i^\alpha)^{-1}$ for $i=1,\cdots, m_n$, where
$$
C_\alpha=\sum_{i=1}^{m_n}i^{-\alpha}\sim \frac{m_n^{1-\alpha}}{1-\alpha}.
$$
Then we have $\dsp p_{min}\sim \frac{1-\alpha}{m_n}$. If we choose the sequences $\{b_n, n\ge 1\}$ and $\{m_n, n\ge 1\}$ satisfy
$$
\frac{b_n^2}{\log n} \to \infty,\ \ \frac{m_n}{b_n^2}\to \infty,\ \ \frac{n}{m_nb_n^4}\to\infty,
$$
Theorem \ref{thm1} holds.
\end{exa}
\begin{exa} Let $p_n(i)=(C_n i)^{-1}$ for $i=1,\cdots, m_n$, where
$$
C_n=\sum_{i=1}^{m_n}\frac{1}{i}\sim \ln m_n.
$$
Then we have $\dsp p_{min}\sim \frac{1}{m_n\ln m_n}$. If we choose the sequences $\{b_n, n\ge 1\}$ and $\{m_n, n\ge 1\}$ satisfy
$$
\frac{b_n^2}{\log n} \to \infty,\ \ \frac{m_n}{b_n^2}\to \infty,\ \ \frac{n}{ m_n b_n^4(\ln m_n)^2}\to\infty,
$$
Theorem \ref{thm1} holds.
\end{exa}
\begin{exa} Let $\alpha > 1$ and set $p_n(i)=(C_\alpha i^\alpha)^{-1}$ for $i=1,\cdots, m_n$, where
$$
C_\alpha=\sum_{i=1}^{m_n}i^{-\alpha}<\infty.
$$
Then we have $\dsp p_{min}=O\left(\frac{1}{m_n^\alpha}\right)$. If we choose the sequences $\{b_n, n\ge 1\}$ and $\{m_n, n\ge 1\}$ satisfy
$$
\frac{b_n^2}{\log n} \to \infty,\ \ \frac{m_n}{b_n^2}\to \infty,\ \ \frac{n}{m_n^{2\alpha-1}b_n^4}\to\infty,
$$
Theorem \ref{thm1} holds.
\end{exa}
\begin{exa}
Consider a sequence of distributions of the form
$$p_{n,1}=\frac{1}{m_n}+\frac{1}{n^\gamma}, \ \ \ p_{n,2}=\frac{1}{m_n}-\frac{1}{n^\gamma},$$
where $\gamma \ge 1$ is a real number and $p_{n,i}=1/m_n$ for all $i=3,4,\cdots$.
If we choose the sequences $\{b_n, n\ge 1\}$ and $\{m_n, n\ge 1\}$ satisfy
$$
\frac{b_n^2}{\log n} \to \infty,\ \ \frac{m_n}{b_n^2}\to \infty,\ \ \frac{n}{m_nb_n^4}\to\infty,
$$
in this case
$$
\dsp p_{min}=\frac{1}{m_n}-\frac{1}{n^\gamma}\sim\frac{1}{m_n},
$$
then Theorem \ref{thm1} holds.
\end{exa}
\begin{exa}
For every $i=1,2, \cdots$, let $p_{n,i}=(1-p_n)^{i-1}p_n$ where $p_n=1-\frac{1}{n^\alpha}$ and $\alpha \in (0,1)$. Then we have
$$
\dsp p_{min}\sim\left(\frac{1}{n^\alpha}\right)^{m_n-1}.
$$
If we choose the sequences $\{b_n, n\ge 1\}$ and $\{m_n, n\ge 1\}$ satisfy
$$
\frac{b_n^2}{\log n} \to \infty,\ \ \frac{m_n}{b_n^2}\to \infty,\ \ \frac{m_n}{n^{2\alpha m_n-1}b_n^4}\to\infty,
$$
Theorem \ref{thm1} holds.

In particular, when $\alpha \in (0,1/2m_n)$, we choose the sequences $\{b_n, n\ge 1\}$ and $\{m_n, n\ge 1\}$ satisfy
$$
\frac{b_n^2}{\log n} \to \infty,\ \ \frac{m_n}{b_n^2}\to \infty,\ \ \frac{m_n}{b_n^4}\to\infty,
$$
Theorem \ref{thm1} holds.
\end{exa}
\section{Proofs of main results}
We state some useful lemmas to prove our main results. Firstly,
let us introduce a simplified version of Puhalskii's result \cite{Puh} applied to an array of martingale differences.

\begin{lem}\label{lem1} $($Puhalskii$)$ Let $\left\{d_j^n, 1\le j\le n\right\}$ be a triangular array of martingale differences with values in $\rr^d$, with respect to a filtration $\{\FF_n, n\ge 1\}$. Let $\{b_n, n\ge 1\}$ be a sequence of increasing positive numbers satisfying
$$
b_n\to\infty,\ \ \ \frac{b_n}{\sqrt{n}}\to 0.
$$
Suppose that there
exists a symmetric positive-semidefinite matrix $Q$ such that
\beq\label{lem1-1}
\lim_{n\to\infty}\frac{1}{b_n^2}\log \pp\left(\left|\frac{1}{n}\sum_{k=1}^n\ee\Big[d_k^n(d_k^n)^{'}|\FF_{k-1}\Big]-Q\right|>\varepsilon\right)=-\infty,\ \ \text{for any}\ \varepsilon>0.
\deq
Suppose that there exists a constant $c>0$ such that, for each $1\le k\le n$,
\beq\label{lem1-2}
\Big|d_k^n\Big|\le c\frac{\sqrt{n}}{b_n}\ \ a.s.
\deq
Suppose also that, for all $a>0$, we have the exponential Lindeberg's condition
\beq\label{lem1-3}
\lim_{n\to\infty}\frac{1}{b_n^2}\log \pp\left(\frac{1}{n}\sum_{k=1}^n\ee\left[|d_k^n|^2\ii\left\{|d_k^n|>a\frac{\sqrt{n}}{b_n}\right\}\Big|\FF_{k-1}\right]>\varepsilon\right)=-\infty,\ \ \text{for any}\ \varepsilon>0.
\deq
Then the sequence
$$
\left\{\frac{1}{b_n\sqrt{n}}\sum_{k=1}^n d_k^n,\ n\ge 1\right\}
$$
satisfies the large deviation principle on $\rr^d$ with speed $b_n^2$ and rate function
$$
I(\nu)=\sup_{\lambda\in\rr^d}\left(\lambda^{'}\nu-\frac{1}{2}\lambda^{'}Q\lambda\right).
$$
In particular, if $Q$ is invertible,
$$
I(\nu)=\frac{1}{2}\lambda^{'}Q^{-1}\lambda.
$$
\end{lem}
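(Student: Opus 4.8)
The plan is to establish the large deviation principle through the G\"{a}rtner--Ellis theorem, which reduces the whole problem to identifying, for every fixed $\lambda\in\rr^d$, the normalized cumulant generating function
\[
\Lambda(\lambda):=\lim_{n\to\infty}\frac{1}{b_n^2}\log\ee\left[\exp\left(b_n^2\,\lambda^{'}S_n\right)\right],\qquad S_n:=\frac{1}{b_n\sqrt n}\sum_{k=1}^n d_k^n,
\]
and showing that $\Lambda(\lambda)=\tfrac12\lambda^{'}Q\lambda$. Setting $x_k:=\tfrac{b_n}{\sqrt n}\lambda^{'}d_k^n$, we have $b_n^2\lambda^{'}S_n=\sum_{k=1}^n x_k$, and the a.s. bound $|x_k|\le c|\lambda|$ from (\ref{lem1-2}) is what makes every exponential estimate below legitimate. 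Once $\Lambda$ is shown to be this finite, differentiable, convex function, G\"{a}rtner--Ellis yields the LDP at speed $b_n^2$ with good rate function $\Lambda^{*}(\nu)=\sup_{\lambda\in\rr^d}(\lambda^{'}\nu-\tfrac12\lambda^{'}Q\lambda)=I(\nu)$; exponential tightness on $\rr^d$ is automatic from the finiteness of $\Lambda$ in a neighborhood of the origin, and when $Q$ is invertible the Legendre transform is computed explicitly as $\tfrac12\nu^{'}Q^{-1}\nu$.

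To compute $\Lambda$, I would introduce the conditional cumulants $\phi_k:=\log\ee[e^{x_k}\,|\,\FF_{k-1}]$ and the mean-one exponential martingale $M_n(\lambda):=\exp\big(\sum_{k=1}^n x_k-\sum_{k=1}^n\phi_k\big)$, so that $\ee[\exp(b_n^2\lambda^{'}S_n)]=\ee[M_n(\lambda)\exp(\sum_k\phi_k)]$. Because the martingale-difference property annihilates the first-order term, a second-order expansion of $e^{x}$ and of $\log(1+\cdot)$ gives
\[
\phi_k=\frac{b_n^2}{2n}\,\lambda^{'}\ee\big[d_k^n(d_k^n)^{'}\,|\,\FF_{k-1}\big]\lambda+\rho_k,
\]
where $\rho_k$ collects the cubic Taylor remainder of the exponential and the quadratic correction of the logarithm. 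The leading term sums to $\tfrac{b_n^2}{2}\lambda^{'}\big(\tfrac1n\sum_k\ee[d_k^n(d_k^n)^{'}|\FF_{k-1}]\big)\lambda$, which by (\ref{lem1-1}) equals $\tfrac{b_n^2}{2}\lambda^{'}Q\lambda$ off an event of superexponentially small probability. For the remainder I would split each contribution at the Lindeberg threshold $a\sqrt n/b_n$: on $\{|d_k^n|\le a\sqrt n/b_n\}$ the bound $|x_k|\le a|\lambda|$ and (\ref{lem1-2}) make the aggregate contribution $O(a)\cdot\tfrac1n\sum_k\ee[|d_k^n|^2|\FF_{k-1}]$, hence $O(a)$; on $\{|d_k^n|>a\sqrt n/b_n\}$ everything is dominated by the quantity appearing in (\ref{lem1-3}) and is therefore superexponentially negligible. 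Letting $n\to\infty$ and then $a\to0$ gives $\tfrac1{b_n^2}\sum_k\phi_k\to\tfrac12\lambda^{'}Q\lambda$ on a good event $B_n$ with $\frac1{b_n^2}\log\pp(B_n^c)\to-\infty$.

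The genuine obstacle is that $\sum_k\phi_k$ is random, so one cannot simply factor $\exp(\sum_k\phi_k)$ out of $\ee[M_n(\lambda)\,\cdot\,]$: on $B_n^c$ the conditional covariance may be large, and the only a.s. bound available from (\ref{lem1-2}) is the hopelessly crude $\sum_k\phi_k=O(n)$, which no merely superexponential smallness of $\pp(B_n^c)$ can offset. I would resolve this by localization. Writing $V_j:=\tfrac1n\sum_{k\le j}\ee[d_k^n(d_k^n)^{'}|\FF_{k-1}]$ and stopping at $\tau_n:=\inf\{j:\ \|V_j\|>\operatorname{tr}Q+1\}$, the crucial observation is that by (\ref{lem1-2}) each increment of $V_j$ has norm at most $c^2/b_n^2\to0$, so the overshoot at $\tau_n$ is negligible and the stopped conditional covariance stays bounded. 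Hence $\sum_{k\le n\wedge\tau_n}\phi_k=O(b_n^2)$ deterministically, the upper estimate $\frac1{b_n^2}\log\ee[\exp(b_n^2\lambda^{'}S_{n\wedge\tau_n})]\le\tfrac12\lambda^{'}Q\lambda+o(1)$ follows from the optional-stopping identity $\ee[M_{n\wedge\tau_n}(\lambda)]=1$ alone, and $\pp(\tau_n\le n)=\pp(\|V_n\|>\operatorname{tr}Q+1)$ is superexponentially small by (\ref{lem1-1}). Thus the stopped and unstopped sums are superexponentially equivalent, transferring the upper bound to $S_n$. For the matching lower bound I would tilt by $M_n(\lambda)$ and verify that, since each per-step factor $e^{x_k}/\ee[e^{x_k}|\FF_{k-1}]$ is bounded, the concentration estimates (\ref{lem1-1})--(\ref{lem1-3}) persist up to constants under the tilted measure, so that $\ee[M_n(\lambda)\ii_{B_n}]\to1$ and the lower estimate for $\Lambda$ follows.

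Combining the two bounds yields $\Lambda(\lambda)=\tfrac12\lambda^{'}Q\lambda$ for every $\lambda$, whereupon G\"{a}rtner--Ellis delivers the stated LDP with rate $I$, together with the explicit form $\tfrac12\nu^{'}Q^{-1}\nu$ when $Q$ is invertible. I expect the localization argument, and the parallel control of the tilted measure in the lower bound, to be the main difficulty: the good-event estimates are routine Taylor expansions, but neutralizing the randomness of the conditional covariance hinges precisely on the interplay between the $O(1/b_n^2)$ jumps of $V_j$ furnished by (\ref{lem1-2}) and the superexponential decay guaranteed by (\ref{lem1-1}).
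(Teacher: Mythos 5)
The paper offers no proof of this lemma at all: its ``proof'' is a one-line citation to Theorem 3.1 of Puhalskii \cite{Puh}, whose argument runs through maxingale machinery. Your self-contained G\"artner--Ellis proof is therefore necessarily a different route, and it is the standard one for martingale moderate deviations with bounded jumps: exponential martingale $M_n(\lambda)$, Taylor expansion of the conditional cumulants $\phi_k$ using the martingale-difference property, Lindeberg splitting of the cubic remainder, and---the point you correctly isolate as the crux---stopping-time localization, exploiting that the increments of $V_j$ are positive semidefinite (so $\|V_j\|$ is monotone and $\{\tau_n\le n\}$ is controlled by the terminal condition (\ref{lem1-1})) and of size $O(c^2/b_n^2)$ by (\ref{lem1-2}) (so there is no overshoot problem). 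This buys a proof readable without Puhalskii's apparatus, at the price of the repairs below.

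Two steps are stated too strongly and, as written, do not hold. First, the upper bound does \emph{not} ``follow from the optional-stopping identity $\ee[M_{n\wedge\tau_n}(\lambda)]=1$ alone'': the deterministic bound on $\sum_{k\le n\wedge\tau_n}\phi_k$ furnished by the stopping time carries the constant of order $e^{c|\lambda|}|\lambda|^2(\operatorname{tr}Q+1)$, not $\tfrac12\lambda'Q\lambda$, so this argument yields only $\Lambda(\lambda)\le C$, which is useless for identifying the rate. To get the sharp constant you must intersect with the good event $B_n=\{|V_n-Q|\le\varepsilon\}\cap\{\text{Lindeberg sum}\le\varepsilon\}$ \emph{inside} the expectation, and then discard $\ee\big[M_{n\wedge\tau_n}e^{\sum_k\phi_k}\ii_{B_n^c}\big]$; the mean-one property is not enough for that---you need, e.g., $\ee[M_{n\wedge\tau_n}^2]\le e^{Cb_n^2}$ (the same exponential-martingale identity run at parameter $2\lambda$, again stopped) together with Cauchy--Schwarz against $\pp(B_n^c)\le e^{-Rb_n^2}$ with $R$ arbitrarily large. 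Second, the lower-bound justification ``each per-step factor is bounded, so (\ref{lem1-1})--(\ref{lem1-3}) persist under the tilted measure'' is invalid as reasoned: per-step boundedness only gives $d\tilde\pp/d\pp=M_n\le e^{O(n)}$, and $e^{O(n)}e^{-Rb_n^2}\to\infty$ since $b_n^2=o(n)$---exactly the obstruction you yourself identified when refusing to factor $e^{\sum_k\phi_k}$ out of the expectation. The correct mechanism is once more stopping plus Cauchy--Schwarz: since $B_n\subset\{\tau_n>n\}$ (its defining sums are predictable with nonnegative, respectively positive-semidefinite, increments, hence monotone in $j$), one has $\ee[M_n\ii_{B_n}]=\ee[M_{n\wedge\tau_n}\ii_{B_n}]=1-\ee[M_{n\wedge\tau_n}\ii_{B_n^c}]\ge 1-\ee[M_{n\wedge\tau_n}^2]^{1/2}\,\pp(B_n^c)^{1/2}\to1$, and then $\ee[e^{b_n^2\lambda'S_n}]\ge\ee[M_n e^{\sum_k\phi_k}\ii_{B_n}]\ge e^{\frac{b_n^2}{2}\lambda'Q\lambda-\delta b_n^2}\,\ee[M_n\ii_{B_n}]$ gives the matching lower bound on the cumulant generating function; note this dispenses entirely with any concentration of $S_n$ under the tilted measure, which your sketch would otherwise also have to prove. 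With these two repairs (both the same device), your proof is complete, and the conclusion via G\"artner--Ellis, including $I(\nu)=\tfrac12\nu'Q^{-1}\nu$ for invertible $Q$, is correct.
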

\begin{proof} The proof of Lemma \ref{lem1} is contained in the proof of Theorem 3.1 in \cite{Puh}.
\end{proof}
\begin{lem}\label{H}\cite{H} Let $X_1, X_2, \cdots, X_n$ be independent random variables with $\ee X_i=0$ and $a_i\le X_i\le b_i$ for any $1\le i\le n$, where $a_1, b_1, a_2, b_2, \cdots, a_n, b_n$ are constants with $a_i<b_i$ for every $1\le i\le n$. Then for any $t>0$, we have
$$
\ee\exp\left(t\sum_{i=1}^n X_i\right) \le \exp\left(\frac{1}{8}t^2\sum_{i=1}^n(b_i-a_i)^2\right).
$$
In particular, for any $r>0$, we have
$$
\pp\left(\left|\sum_{i=1}^n X_i\right|>rn\right) \le 2\exp\left(-\frac{2 n^2r^2}{\sum_{i=1}^n(b_i-a_i)^2}\right).
$$
\end{lem}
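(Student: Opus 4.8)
The plan is to first establish the moment-generating-function bound and then deduce the tail inequality by a Chernoff-type argument. By the independence of $X_1, X_2, \ldots, X_n$, the joint mgf factorizes as
$$
\ee\exp\left(t\sum_{i=1}^n X_i\right)=\prod_{i=1}^n\ee\exp(tX_i),
$$
so it suffices to control each factor separately. The key single-variable estimate (Hoeffding's lemma) states that if $\ee X=0$ and $a\le X\le b$ (which forces $a\le 0\le b$), then $\ee e^{tX}\le\exp\left(t^2(b-a)^2/8\right)$; once this is available, multiplying the $n$ bounds together yields the desired factor $\exp\left(\frac{1}{8}t^2\sum_{i=1}^n(b_i-a_i)^2\right)$.

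To prove the single-variable estimate I would exploit the convexity of $x\mapsto e^{tx}$. For every $x\in[a,b]$, convexity gives
$$
e^{tx}\le\frac{b-x}{b-a}e^{ta}+\frac{x-a}{b-a}e^{tb}.
$$
Taking expectations and using $\ee X=0$ eliminates the linear term and produces
$$
\ee e^{tX}\le\frac{b}{b-a}e^{ta}-\frac{a}{b-a}e^{tb}.
$$
Introducing $p:=-a/(b-a)\in[0,1]$ and $u:=t(b-a)$, the right-hand side can be rewritten as $e^{\phi(u)}$ with $\phi(u)=-pu+\log\left(1-p+pe^u\right)$.

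The main obstacle is the estimate $\phi(u)\le u^2/8$. I would settle it by a second-order Taylor expansion about $u=0$: one checks directly that $\phi(0)=0$ and $\phi'(0)=0$, while differentiating twice gives
$$
\phi''(u)=\frac{p(1-p)e^u}{(1-p+pe^u)^2}=q(1-q),\qquad q:=\frac{pe^u}{1-p+pe^u}\in[0,1],
$$
so that $\phi''(u)\le 1/4$ uniformly in $u$. Taylor's formula with remainder then yields $\phi(u)=\frac{u^2}{2}\phi''(\xi)\le u^2/8$ for some $\xi$, completing Hoeffding's lemma and hence the mgf bound.

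For the tail inequality I would apply Markov's inequality to the exponential: for $t>0$,
$$
\pp\left(\sum_{i=1}^n X_i>rn\right)\le e^{-trn}\,\ee\exp\left(t\sum_{i=1}^n X_i\right)\le\exp\left(-trn+\frac{t^2}{8}\sum_{i=1}^n(b_i-a_i)^2\right).
$$
Optimizing the quadratic exponent over $t>0$ by choosing $t=4rn/\sum_{i=1}^n(b_i-a_i)^2$ gives the upper-tail bound $\exp\left(-2r^2n^2/\sum_{i=1}^n(b_i-a_i)^2\right)$. Applying the identical argument to the variables $-X_i$ (whose ranges have the same lengths $b_i-a_i$) controls the lower tail by the same quantity, and a union bound over the two one-sided events produces the factor $2$ in the stated two-sided inequality.
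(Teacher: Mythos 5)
Your proof is correct and complete. The paper itself offers no proof of this lemma --- it is quoted directly from Hoeffding's 1963 paper (reference \cite{H}) --- and what you have written is precisely the classical argument from that source: Hoeffding's one-variable lemma via convexity of $x\mapsto e^{tx}$ and the uniform bound $\phi''(u)=q(1-q)\le 1/4$, followed by the Chernoff--Markov optimization with $t=4rn/\sum_{i=1}^n(b_i-a_i)^2$ and a union bound over the two tails. All the computations (the change of variables $p=-a/(b-a)$, $u=t(b-a)$, the vanishing of $\phi(0)$ and $\phi'(0)$, and the optimized exponent $-2r^2n^2/\sum_{i=1}^n(b_i-a_i)^2$) check out, so your write-up supplies a self-contained proof where the paper relies on a citation.
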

\begin{lem}\label{lem-hr} \cite[Theorem 3.4]{H-R}
Let $X_1, X_2, \ldots, X_n$ be independent random variables defined on a probability $(\Omega, \FF, \pp)$.
Let us consider for all integer $n\ge 2$,
$$
U_n=\sum_{i=2}^n \sum_{j=1}^{i-1} g_{i,j}(X_i, X_j),
$$
where the $g_{i,j}: \rr\times\rr\to \rr$ are Borel measurable functions verifying
$$
\ee\left(g_{i,j}(X_i, X_j)|X_i\right)=0\ \ \text{and}\ \ \ee\left(g_{i,j}(X_i, X_j)|X_j\right)=0.
$$
Let $u>0$, $\varepsilon>0$ and let $|g_{i,j}|\le A$ for all $i,j$. Then we have
$$
\pp\Big(U_n\ge 2(1+\varepsilon)^{3/2}C\sqrt{u}+\eta(\varepsilon)Du+\beta(\varepsilon)Bu^{3/2}+\gamma(\varepsilon)Au^2\Big)\le 2.77e^{-u}.
$$
Here
\beq\label{C}
C^2=\sum_{i=2}^n \sum_{j=1}^{i-1}\ee\left(g_{i,j}^2(X_i, X_j)\right),
\deq
\beq\label{D}
\aligned
D=&\sup\left\{\ee\left(\sum_{i=2}^n\sum_{j=1}^{i-1}g_{i,j}(X_i, X_j)a_i(X_i)b_j(X_j)\right): \right. \\
& \ \ \ \ \ \ \ \ \ \ \ \ \ \ \ \ \ \  \ \ \ \ \ \ \ \
\left. \ee\left(\sum_{i=2}^na_i^2(X_i)\right)\le 1, \ \ \ee\left(\sum_{j=1}^{n-1}b_j^2(X_j)\right)\le 1
\right\},
\endaligned
\deq
\beq\label{B}
B^2=\max\left\{\sup_{i,t}\left(\sum_{j=1}^{i-1}\ee\left(g_{i,j}^2(t, X_j)|X_i=t\right)\right),\
\sup_{j,t}\left(\sum_{i=j+1}^{n}\ee\left(g_{i,j}^2(X_i, t)|X_j=t\right)\right)
\right\},
\deq
where
$$
\aligned
\eta(\varepsilon)&=\sqrt{2\kappa}(2+\varepsilon+\varepsilon^{-1}),\\
\beta(\varepsilon)&=e(1+\varepsilon^{-1})^2\kappa(\varepsilon)+\Big[\sqrt{2\kappa}(2+\varepsilon+\varepsilon^{-1})\vee\frac{(1+\varepsilon)^2}{\sqrt{2}}\Big],\\
\gamma(\varepsilon)&=(e(1+\varepsilon^{-1})^2\kappa(\varepsilon))\vee\frac{(1+\varepsilon)^2}{3},\\
\kappa&=4,\\
\kappa(\varepsilon)&=2.5+32\varepsilon^{-1}.
\endaligned
$$
\end{lem}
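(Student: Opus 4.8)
The plan is to follow the two-step scheme that underlies all sharp exponential bounds for degenerate U-statistics of order two: first \emph{decouple} the statistic so that the two arguments of each kernel are drawn from independent families, and then control the resulting second-order chaos by \emph{iterated conditional concentration}, peeling off one level of randomness at a time. The four terms $2(1+\varepsilon)^{3/2}C\sqrt{u}$, $\eta(\varepsilon)Du$, $\beta(\varepsilon)Bu^{3/2}$, $\gamma(\varepsilon)Au^2$ are forced by this scheme: they are precisely the Gaussian, operator-norm, conditional-variance, and sup-norm scales that surface when one tracks the fluctuation of the conditional variance through two successive applications of a Bernstein-type inequality, so the whole argument is organized around producing exactly these four quantities $C,D,B,A$.

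First I would invoke a de la Peña-type decoupling inequality. Since $x\mapsto e^{\lambda x}$ is convex and increasing, the exponential moments of $U_n$ are dominated, up to a universal factor, by those of the decoupled version $\tilde U_n=\sum_{i>j}g_{i,j}(X_i,Y_j)$, where $(Y_j)$ is an independent copy of $(X_j)$. The degeneracy hypotheses $\ee(g_{i,j}(X_i,X_j)|X_i)=\ee(g_{i,j}(X_i,X_j)|X_j)=0$ are exactly what makes the Hoeffding projection of order one vanish, so that $\tilde U_n$ is a genuine second-order chaos with no linear part; this is what legitimizes the Gaussian scale $C\sqrt{u}$ as the leading term. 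Matters thereby reduce to bounding $\ee\exp(\lambda\tilde U_n)$.

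The heart of the argument is a nested estimate. Conditionally on $(Y_j)$, write $\tilde U_n=\sum_i f_i(X_i)$ with $f_i(x)=\sum_{j<i}g_{i,j}(x,Y_j)$; these summands are independent in $i$, centered by degeneracy, and bounded in a way controlled by $A$ and $B$. A Bernstein-type inequality in the $X$-variables yields a conditional bound $\ee_X\exp(\lambda\tilde U_n)\le\exp(\psi(\lambda)V+(\text{sup-norm correction}))$, where $V=\sum_i\ee_X[f_i(X_i)^2]$ is the conditional variance. A direct computation using the degeneracy shows $\ee_Y V=\sum_{i,j}\ee(g_{i,j}^2)=C^2$, matching \eqref{C}, while the fluctuation $V-C^2$ is itself an off-diagonal quadratic form in the $Y_j$'s, i.e.\ a second-order chaos whose relevant scales are exactly the operator-norm quantity $D$ from \eqref{D}, the conditional-variance quantity $B$ from \eqref{B}, and the sup-norm $A$. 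A \emph{second} concentration step over the $Y$-variables controls this fluctuation; combining the two exponential estimates and taking the Legendre transform in $\lambda$ produces a tail of the stated four-term shape, and carrying the numerical constants $\kappa=4$ and $\kappa(\varepsilon)=2.5+32\varepsilon^{-1}$ through the optimization yields the explicit $\eta(\varepsilon)$, $\beta(\varepsilon)$, $\gamma(\varepsilon)$ and the factor $2.77$.

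The main obstacle is this second concentration step. One cannot afford to crudely bound $V$ by $C^2$; one must capture its fluctuation by the operator norm $D$ (through the variational characterization in \eqref{D}, which is precisely the dual description of the $L^2$-fluctuation of the bilinear form), capture its extreme values by $B$, and then balance these against the first-level sup-norm correction so as not to lose the sharp exponents $u^{3/2}$ and $u^2$. Deriving the qualitative four-term \emph{shape} is comparatively routine once decoupling and the nested conditioning are in place; the genuinely delicate part is keeping the constants clean through the coupled optimization of the two Legendre transforms, and it is this bookkeeping that forces the precise forms of $\beta(\varepsilon)$ and $\gamma(\varepsilon)$.
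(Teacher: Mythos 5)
This lemma is not proved in the paper at all: it is quoted verbatim from Houdr\'e and Reynaud-Bouret \cite[Theorem 3.4]{H-R}, so the only meaningful comparison is with that source's argument. Your decoupling-plus-iterated-Bernstein scheme is essentially the Gin\'e--Lata{\l}a--Zinn route; it does explain the four-scale shape $C\sqrt{u}+Du+Bu^{3/2}+Au^2$, but it cannot deliver the statement as written, and the gap lies exactly in the step you dismiss as bookkeeping. De la Pe\~na--Montgomery-Smith decoupling is not free: for tails it gives $\pp\left(|U_n|>t\right)\le c_1\,\pp\left(c_2|\tilde U_n|>t\right)$ with universal constants $c_1,c_2>1$ that are not small, and for moment generating functions the decoupling constant enters multiplicatively in $\lambda$; either way every coefficient in the final bound is degraded and the prefactor ceases to be $2.77$. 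An argument that opens with decoupling can prove a theorem of this \emph{shape} with unspecified universal constants, but not this theorem with these constants --- which is precisely why Houdr\'e and Reynaud-Bouret advertise their proof as decoupling-free.

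Second, your ``second concentration step'' is circular as described: after conditioning on the first layer, the fluctuation $V-C^2$ of the conditional variance is itself a \emph{degenerate second-order chaos} in the remaining variables (its linear projections vanish by the same degeneracy hypothesis), i.e.\ exactly the kind of object whose exponential tail is the problem being solved, and Bernstein's inequality does not apply to it. The cited proof breaks this circle differently: $U_n$ is treated as a martingale with increments $\xi_i=\sum_{j<i}g_{i,j}(X_i,X_j)$, and the chaos component of the predictable variation is controlled through its dual representation --- the variational formula \eqref{D} exhibits it as the supremum of an empirical process indexed by pairs $(a,b)$ in unit balls --- to which Talagrand's concentration inequality for suprema of empirical processes, in Massart's explicit-constant form, is applied. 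That is where $\kappa=4$ and $\kappa(\varepsilon)=2.5+32\varepsilon^{-1}$ come from; they are imported from that concentration inequality, not produced by ``optimization of the two Legendre transforms'' as you assert. Since the numerical content of the lemma is tied to a tool (Talagrand--Massart for suprema) that your scheme never invokes, the stated constants $\eta(\varepsilon),\beta(\varepsilon),\gamma(\varepsilon)$ and the factor $2.77$ are unobtainable along the route you propose.
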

\begin{proof}[{\bf Proof of Theorem \ref{thm1}}]
From the definition of the standard Pearson chi-square statistic, we have
$$
\ee\chi_n^2=m_n-1,\ \ \ Var(\chi^2_n)=\frac{1}{n}\left(2(m_n-1)(n-1)+\sum_{i=1}^{m_n}\frac{1}{p_n(i)}-m_n^2\right)
$$
and
\beq\label{p1}
\aligned
\chi_n^2-\ee\chi_n^2=&\frac{1}{n}\sum_{k=1}^n\sum_{i=1}^{m_n}\frac{1}{p_n(i)}\ii\{X_{k,n}=i\}-m_n\\
&+\frac{1}{n}\sum_{k\neq l}^n\sum_{i=1}^{m_n}\frac{1}{p_n(i)}\ii\{X_{k,n}=i, X_{l,n}=i\}-(n-1).
\endaligned
\deq
For every $n\ge 1$ and $1\le k\le n$, let us define
$$
T_{k,n}:=\sum_{i=1}^{m_n}\frac{1}{p_n(i)}\ii\{X_{k,n}=i\}-m_n
$$
and
$$
Y_{k,n}:=\frac{\sqrt{2}}{\sqrt{nm_n}}\sum_{l=1}^{k-1}\left(\sum_{i=1}^{m_n}\frac{1}{p_n(i)}\ii_{\{X_{k,n}=i\}}\ii_{\{X_{l,n}=i\}}
-1\right).
$$
Then, for every $n\ge 1$, $\{T_{k,n}, 1\le k\le n\}$ is a sequence of independent and identically distributed random variables, and $\{Y_{k,n}, 1\le k\le n\}$ is a sequence of martingale difference with respect to $\sigma$-algebra $\FF_{k,n}=\sigma(X_{i,n}, i\le k)$. Hence we have
$$
\chi_n^2-\ee\chi_n^2=\frac{1}{n}\sum_{k=1}^n T_{k,n}+\frac{\sqrt{2m_n}}{\sqrt{n}}\sum_{k=2}^nY_{k,n}.
$$

In order to prove the desired result, it is enough to show that for any $\varepsilon>0$,
\beq\label{cl-1}
\lim_{n\to\infty}\frac{1}{b_n^2}\log \pp\left(\frac{1}{b_n\sqrt{2m_n}}\left|\frac{1}{n}\sum_{k=1}^n T_{k,n}\right|>\varepsilon\right)=-\infty
\deq
and
for any $r>0$,
\beq\label{p6}
\lim_{n\to\infty}\frac{1}{b_n^2}\log \pp\left(\frac{1}{b_n\sqrt{n}}\left|\sum_{k=2}^n Y_{k,n}\right|>r\right)=-\frac{r^2}{2}.
\deq
It is easy to see that
\beq\label{bbn}
-m_n\le\sum_{i=1}^{m_n}\frac{1}{p_n(i)}\ii\{X_{k,n}=i\}
-m_n\le \frac{1}{p_{min}}-m_n.
\deq
From Lemma \ref{H} and the condition (\ref{con}),  we have
\begin{align*}
&\lim_{n\to\infty}\frac{1}{b_n^2}\log \pp\left(\frac{1}{b_n\sqrt{2m_n}}\left|\frac{1}{n}\sum_{k=1}^n T_{k,n}\right|>\varepsilon\right)\\
=&\lim_{n\to\infty}\frac{1}{b_n^2}\log \pp\left(\frac{1}{b_n\sqrt{2m_n}}\left|\frac{1}{n}\sum_{k=1}^n\left(\sum_{i=1}^{m_n}\frac{1}{p_n(i)}\ii\{X_{k,n}=i\}
-m_n\right)\right|>\varepsilon\right)\\
\le&\lim_{n\to\infty}\frac{1}{b_n^2}\log \left(2\exp\left(-\frac{4\varepsilon^2 b_n^2m_nn^2p_{min}^2}{n}\right)\right)\\
=&-\lim_{n\to\infty}\frac{4\varepsilon^2 m_nnp_{min}^2}{b_n^2}=-\infty,
\end{align*}
which yields the claim (\ref{cl-1}). Furthermore,
for every $n\ge 1$ and $1\le k\le n$, let us define
$$
Y_{k,n}=Z_{k,n}+Z_{k,n}^{'}
$$
where
$$
Z_{k,n}:=Y_{k,n}\ii\left\{|Y_{k,n}|\le\frac{\sqrt{n}}{b_n}\right\}-\ee\left(Y_{k,n}\ii\left\{|Y_{k,n}|\le\frac{\sqrt{n}}{b_n}\right\}\Big|\FF_{k-1,n}\right)
$$
and
$$
Z_{k,n}^{'}:=Y_{k,n}\ii\left\{|Y_{k,n}|>\frac{\sqrt{n}}{b_n}\right\}-\ee\left(Y_{k,n}\ii\left\{|Y_{k,n}|>\frac{\sqrt{n}}{b_n}\right\}\Big|\FF_{k-1,n}\right).
$$
Obviously $\{Z_{k,n}, \FF_{k,n}, 1\le k\le n\}$ and $\{Z^{'}_{k,n}, \FF_{k,n}, 1\le k\le n\}$ are also sequences of martingale differences and $\dsp |Z_{k,n}|\leq 2\frac{\sqrt{n}}{b_n}\ a.s$. Hence, in order to obtain (\ref{p6}), it is enough to prove that for any $r>0$,
\beq\label{p8}
\lim_{n\to\infty}\frac{1}{b_n^2}\log \pp\left(\frac{1}{b_n\sqrt{n}}\left|\sum_{k=2}^n Z_{k,n}\right|>r\right)=-\frac{r^2}{2}
\deq
and for any $\varepsilon>0$,
\beq\label{p9}
\lim_{n\to\infty}\frac{1}{b_n^2}\log \pp\left(\frac{1}{b_n\sqrt{n}}\left|\sum_{k=2}^n Z_{k,n}^{'}\right|>\varepsilon\right)=-\infty.
\deq

Firstly, for any $1\le l\le n$ and $1\le j\le m_n$, it is easy to see that
$$
-1\le \frac{1}{p_n(j)}\ii\{X_{l,n}=j\}-1\le\frac{1}{p_{min}}-1.
$$
Then, from Lemma \ref{H}, for any $a>0$, we have
\begin{align*}
&  \sum_{k=2}^n \pp\left(|Y_{k,n}|>a\frac{\sqrt{n}}{b_n}\right)\\
=& \sum_{k=2}^n\pp\left(\left|\frac{\sqrt{2}}{\sqrt{nm_n}}\sum_{l=1}^{k-1}\left(\sum_{i=1}^{m_n}\frac{1}{p_n(i)}\ii\{X_{k,n}=i, X_{l,n}=i\}-1\right)\right|>a\frac{\sqrt{n}}{b_n}\right)\\
=& \sum_{k=2}^n\pp\left(\left|\sum_{l=1}^{k-1}\left(\sum_{i=1}^{m_n}\frac{1}{p_n(i)}\ii\{X_{k,n}=i, X_{l,n}=i\}-1\right)\right|>a\frac{n\sqrt{m_n}}{\sqrt{2}b_n}\right)\\
=& \sum_{k=2}^n\sum_{j=1}^{m_n}\pp\left(\left|\sum_{l=1}^{k-1}\left(\sum_{i=1}^{m_n}\frac{1}{p_n(i)}\ii\{X_{k,n}=i, X_{l,n}=l\}-1\right)\right|>a\frac{n\sqrt{m_n}}{\sqrt{2}b_n}, X_{k,n}=j\right)\\
=&  \sum_{k=2}^n\sum_{j=1}^{m_n}p_n(j)\pp\left(\left|\sum_{l=1}^{k-1}\left(\frac{1}{p_n(j)}\ii\{X_{l,n}=j\}-1\right)\right|>a\frac{n\sqrt{m_n}}{\sqrt{2}b_n}\right)\\
\le&   2\sum_{k=2}^n\sum_{j=1}^{m_n}p_n(j)\exp\left\{-\frac{a^2n^2m_np_{min}^2}{2(k-1)b_n^2}\right\}\le   2\sum_{k=2}^n\sum_{j=1}^{m_n}p_n(j)\exp\left\{-\frac{a^2nm_np_{min}^2}{2b_n^2}\right\}\\
\le &2n\exp\left\{-\frac{a^2nm_np_{min}^2}{2b_n^2}\right\},
\end{align*}
which, together with the conditions $\log n/b_n^2\to 0$ and $\dsp \frac{n m_n p_{min}^2}{b_n^4}\to\infty$, implies that
\beq\label{p10}
\lim_{n\to\infty}\frac{1}{b_n^2}\log \sum_{k=2}^n \pp\left(|Y_{k,n}|>a\frac{\sqrt{n}}{b_n}\right)=-\infty.
\deq

For any $\varepsilon>0$, since
\begin{align*}
&\pp\left(\frac{1}{b_n\sqrt{n}}\left|\sum_{k=2}^n Z_{k,n}^{'}\right|>\varepsilon\right)\\
\le &\pp\left(\frac{1}{b_n\sqrt{n}}\left|\sum_{k=2}^n Y_{k,n}\ii\left\{|Y_{k,n}|>\frac{\sqrt{n}}{b_n}\right\}\right|>\frac{\varepsilon}{2}\right)\\
&\ \ \ \ \ \ +\pp\left(\frac{1}{b_n\sqrt{n}}\left|\sum_{k=2}^n \ee\left(Y_{k,n}\ii\left\{|Y_{k,n}|>\frac{\sqrt{n}}{b_n}\right\}\Big|\FF_{k-1,n}\right)
\right|>\frac{\varepsilon}{2}\right)\\
\le & 2\sum_{k=2}^n \pp\left(|Y_{k,n}|>\frac{\sqrt{n}}{b_n}\right),
\end{align*}
then, from (\ref{p10}), we get the claim (\ref{p9}).

 Next, since $\dsp |Z_{k,n}|\leq 2\frac{\sqrt{n}}{b_n}\ a.s$ for each $1\le k\le n$, in order to obtain (\ref{p8}), by Lemma \ref{lem1}, it is enough to prove that for any $\varepsilon>0$,
\beq\label{p11}
\lim_{n\to\infty}\frac{1}{b_n^2}\log \pp\left(\left|\frac{1}{n}\sum_{k=2}^n\ee(Z_{k,n}^2|\FF_{k-1,n})-1\right|>\varepsilon\right)=-\infty
\deq
and for all $a>0$,
\beq\label{p12}
\lim_{n\to\infty}\frac{1}{b_n^2}\log\pp\left(\frac{1}{n}\sum_{k=2}^n
\ee\left(Z_{k,n}^2\ii\left\{|Z_{k,n}|>a\frac{\sqrt{n}}{b_n}\right\}\Big|\FF_{k-1,n}\right)>\varepsilon\right)=-\infty.
\deq
For each $1\le k\le n$, from the fact
$$
\ee\left(Y_{k,n}\ii\left\{|Y_{k,n}|\le\frac{\sqrt{n}}{b_n}\right\}\Big|\FF_{k-1,n}\right)
=-\ee\left(Y_{k,n}\ii\left\{|Y_{k,n}|>\frac{\sqrt{n}}{b_n}\right\}\Big|\FF_{k-1,n}\right),
$$
then for any $\varepsilon>0$, we have
\begin{align*}
& \pp\left(\frac{1}{n}\sum_{k=2}^n
\ee\left(Z_{k,n}^2\ii\left\{|Z_{k,n}|>a\frac{\sqrt{n}}{b_n}\right\}\Big|\FF_{k-1,n}\right)>\varepsilon\right)\\
\leq& \sum_{k=2}^n\pp\left(
\ee\left(Z_{k,n}^2\ii\left\{|Z_{k,n}|>a\frac{\sqrt{n}}{b_n}\right\}\Big|\FF_{k-1,n}\right)>\varepsilon\right)\\
\leq&
 \sum_{k=2}^{n}\pp\left(|Z_{k,n}|>a\frac{\sqrt{n}}{b_n}\right)\\
\leq&\sum_{k=2}^n\pp\left(\left|Y_{k,n}\ii\left\{|Y_{k,n}|\le\frac{\sqrt{n}}{b_n}\right\}
\right|>\frac{a\sqrt{n}}{2b_n}\right)\\
&\ \ \ +\sum_{k=2}^n\pp\left(\left|\ee\left(Y_{k,n}\ii\left\{|Y_{k,n}|\le\frac{\sqrt{n}}{b_n}\right\}
\Big|\FF_{k-1,n}\right)\right|>\frac{a\sqrt{n}}{2b_n}\right)\\
\leq&\sum_{k=2}^n\pp\left(|Y_{k,n}|>\frac{a\sqrt{n}}{2b_n}\right)+\sum_{k=2}^n\pp\left(|Y_{k,n}|>\frac{\sqrt{n}}{b_n}\right).
\end{align*}
Hence from (\ref{p10}), the claim (\ref{p12}) holds.

In order show the calim (\ref{p11}), it is enough to prove that for any $\varepsilon>0$,
\beq\label{p18-1}
\lim_{n\to\infty}\frac{1}{b_n^2}\log \pp\left(\left|\frac{1}{n}\sum_{k=2}^n\Big[\ee(Z_{k,n}^2|\FF_{k-1,n})-\ee(Y_{k,n}^2|\FF_{k-1,n})\Big]\right|>\varepsilon\right)=-\infty
\deq
and
\beq\label{p18}
\lim_{n\to\infty}\frac{1}{b_n^2}\log \pp\left(\left|\frac{1}{n}\sum_{k=2}^n\ee(Y_{k,n}^2|\FF_{k-1,n})-1\right|>\varepsilon\right)=-\infty.
\deq
For each $1\le k\le n$, since
\begin{align*}
&\ee(Z_{k,n}^2|\FF_{k-1,n})-\ee(Y_{k,n}^2|\FF_{k-1,n})\\
=&\ee\left(Y^2_{k,n}\ii\left\{|Y_{k,n}|\le\frac{\sqrt{n}}{b_n}\right\}
\Big|\FF_{k-1,n}\right)-\ee(Y_{k,n}^2|\FF_{k-1,n})\\
&\ \ \ \ -\left(\ee\left(Y_{k,n}\ii\left\{|Y_{k,n}|\le\frac{\sqrt{n}}{b_n}\right\}\Big|\FF_{k-1,n}\right)
\right)^2\\
=&-\ee\left(Y^2_{k,n}\ii\left\{|Y_{k,n}|>\frac{\sqrt{n}}{b_n}\right\}
\Big|\FF_{k-1,n}\right)\\
&\ \ \ \ -\left(\ee\left(Y_{k,n}\ii\left\{|Y_{k,n}|>\frac{\sqrt{n}}{b_n}\right\}\Big|\FF_{k-1,n}\right)
\right)^2,
\end{align*}
then we have
\begin{align*}
&\pp\left(\left|\frac{1}{n}\sum_{k=2}^n\Big[\ee(Z_{k,n}^2|\FF_{k-1,n})-\ee(Y_{k,n}^2|\FF_{k-1,n})\Big]\right|>\varepsilon\right)\\
\le & \pp\left(\frac{1}{n}\sum_{k=2}^n\ee\left(Y^2_{k,n}\ii\left\{|Y_{k,n}|>\frac{\sqrt{n}}{b_n}\right\}
\Big|\FF_{k-1,n}\right)>\frac{\varepsilon}{2}\right)\\
&\ \ \ \ \ +\pp\left(\frac{1}{n}\sum_{k=2}^n\left(\ee\left(Y_{k,n}\ii\left\{|Y_{k,n}|>\frac{\sqrt{n}}{b_n}\right\}\Big|\FF_{k-1,n}\right)
\right)^2>\frac{\varepsilon}{2}\right)\\
\le & 2\sum_{k=2}^n \pp\left(|Y_{k,n}|>\frac{\sqrt{n}}{b_n}\right).
\end{align*}
Hence from (\ref{p10}), the claim (\ref{p18-1}) holds.

For each $1\le k\le n$, since
\beq\label{p20}
\aligned
&\ee(Y_{k,n}^2|\FF_{k-1,n})\\
=&\frac{2}{nm_n}\sum_{s\ne l}^{k-1}\ee\left[\left(\sum_{i=1}^{m_n}\frac{1}{p_n(i)}\ii_{\{X_{k,n}=i\}}\ii_{\{X_{l,n}=i\}}
-1\right)\left(\sum_{i=1}^{m_n}\frac{1}{p_n(i)}\ii_{\{X_{k,n}=i\}}\ii_{\{X_{s,n}=i\}}
-1\right)\Big|\FF_{k-1,n}\right]\\
&\ \ \ +\frac{2}{nm_n}\sum_{l=1}^{k-1}\ee\left[\left(\sum_{i=1}^{m_n}\frac{1}{p_n(i)}\ii_{\{X_{k,n}=i\}}\ii_{\{X_{l,n}=i\}}
-1\right)^2\Big|\FF_{k-1,n}\right]\\
=&\frac{2}{nm_n}\sum_{s\ne l}^{k-1}\left(\sum_{i=1}^{m_n}\frac{1}{p_n(i)}\ii\{X_{s,n}=i\}\ii\{X_{l,n}=i\}-1\right)\\
&\ \ \ +\frac{2}{nm_n}\sum_{l=1}^{k-1}\left(\sum_{i=1}^{m_n}\frac{1}{p_n(i)}\ii\{X_{l,n}=i\}-1\right),
\endaligned
\deq
then, in order to prove (\ref{p18}), it is enough to show that the following claims hold: for any $\varepsilon > 0$,
\beq\label{p21}
\lim_{n\to\infty}\frac{1}{b_n^2}\log \pp\left(\frac{2}{n^2m_n}\left|\sum_{k=2}^n\sum_{s\ne l}^{k-1}\left(\sum_{i=1}^{m_n}
\frac{1}{p_n(i)}\ii\{X_{s,n}=i\}\ii\{X_{l,n}=i\}-1\right)
\right|>\varepsilon\right)=-\infty
\deq
and
\beq\label{p22}
\lim_{n\to\infty}\frac{1}{b_n^2}\log\pp\left(\left|\frac{2}{n^2m_n}\sum_{k=2}^n\sum_{l=1}^{k-1}
\left(\sum_{i=1}^{m_n}\frac{1}{p_n(i)}\ii\{X_{l,n}=i\}-1\right)-1\right|>\varepsilon\right)
=-\infty.
\deq

Firstly, for every $n\ge 1$ and $1\le k\le n$, let us define
$$
U_{k,n}=\sum_{s=2}^{k-1} \sum_{l=1}^{s-1} g_{s,l}(X_{s,n}, X_{l,n}),
$$
where
$$
\aligned
g_{s,l}(X_{s,n}, X_{l,n})=&
\sum_{i=1}^{m_n}
\frac{1}{p_n(i)}\ii\{X_{s,n}=i\}\ii\{X_{l,n}=i\}-1\\
=&\sum_{i=1}^{m_n}
\frac{1}{p_n(i)}\left(\ii\left\{X_{s, n}=i\right\}-p_n(i)\right)\left(\ii\left\{X_{l, n}=i\right\}-p_n(i)\right).
\endaligned
$$
It is easy to check that $|g_{s,l}(X_{s,n}, X_{l,n})|\le \frac{1}{p_{min}}=:A$ and
$$
\ee\left(g_{s,l}(X_{s,n}, X_{l,n})|X_{s,n}\right)= \ee\left(g_{s,l}(X_{s,n}, X_{l,n})|X_{l,n}\right)=0,\ \ \ \ \ s\ne l.
$$
Now we shall estimate the parameters $B, C, D$ in Lemma \ref{lem-hr}. It is easy to check that
\begin{align*}
g_{s,l}^2(X_{s,n}, X_{l,n})=& \left(\sum_{i=1}^{m_n}\frac{1}{p_n(i)}\left(\ii\{X_{s,n}=i\}-p_n(i)\right)\left(\ii\{X_{l,n}=i\}-p_n(i)\right)\right)^2\\
=&\sum_{i=1}^{m_n}\frac{1}{p_n^2(i)}\left(\ii\{X_{s,n}=i\}-p_n(i)\right)^2\left(\ii\{X_{l,n}=i\}-p_n(i)\right)^2\\
&\ +\sum_{i\ne i^{'}}^{m_n}\frac{1}{p_n(i)p_n(i^{'})}\left(\ii\{X_{s,n}=i\}-p_n(i)\right)\left(\ii\{X_{l,n}=i\}-p_n(i)\right)\\
&\ \ \ \ \ \ \ \ \ \ \ \ \ \cdot\left(\ii\{X_{s,n}=i^{'}\}-p_n(i^{'})\right)\left(\ii\{X_{l,n}=i^{'}\}-p_n(i^{'})\right),
\end{align*}
\begin{align*}
\ee\Big[\left(\ii\{X_{s,n}=i\}-p_n(i)\right)^2\left(\ii\{X_{l,n}=i\}-p_n(i)\right)^2\Big]=p_n^2(i)(1-p_n(i))^2
\end{align*}
and
\begin{align*}
&\ee\Big[\left(\ii\{X_{s,n}=i\}-p_n(i)\right)\left(\ii\{X_{l,n}=i\}-p_n(i)\right)\\
&\ \ \ \  \left(\ii\{X_{s,n}=i^{'}\}-p_n(i^{'})\right)\left(\ii\{X_{l,n}=i^{'}\}-p_n(i^{'})\right)\Big]\\
=&\ee\left(\ii\{X_{s,n}=i\}\ii\{X_{s,n}=i^{'}\}-p_n(i)\ii\{X_{s,n}=i^{'}\}-\ii\{X_{s,n}=i\}p_n(i^{'})+p_n(i^{'})p_n(i)\right)\\
&\ \ \cdot\ee\left(\ii\{X_{l,n}=i\}\ii\{X_{l,n}=i^{'}\}-p_n(i)\ii\{X_{l,n}=i^{'}\}-\ii\{X_{l,n}=i\}p_n(i^{'})+p_n(i^{'})p_n(i)\right)\\
=&\ee\left(-p_n(i)\ii\{X_{s,n}=i^{'}\}-\ii\{X_{s,n}=i\}p_n(i^{'})+p_n(i^{'})p_n(i)\right)\\
&\ \ \cdot\ee \left(-p_n(i)\ii\{X_{l,n}=i^{'}\}-\ii\{X_{l,n}=i\}p_n(i^{'})+p_n(i^{'})p_n(i)\right)\\
=&p_n^2(i^{'})p_n^2(i).
\end{align*}
Hence we have
\begin{align*}
\ee\left(g_{s,l}^2(X_{s,n}, X_{l,n})\right)=&\sum_{i=1}^{m_n}\left(1-p_n(i)\right)^2+\sum_{i\ne i^{'}}^{m_n}p_n(i)p_n(i^{'})\\
=& m_n-2+\sum_{i=1}^{m_n}p_n^2(i)+\sum_{i\ne i^{'}}^{m_n}p_n(i)p_n(i^{'})=m_n-1,
\end{align*}
which yields that
$$
C^2=\sum_{s=2}^{k-1} \sum_{l=1}^{s-1}\ee\left(g^2_{s,l}(X_{s,n}, X_{l,n})\right)\le m_nk^2\le m_nn^2.
$$

Furthermore, for any $1\le t\le m_n$, we have
\begin{align*}
&\ee\left(g_{s,l}^2(t, X_{l,n})\Big| X_{s,n}=t\right)\\
=& \ee\left(\left(\sum_{i=1}^{m_n}
\frac{1}{p_n(i)}\ii\{X_{s,n}=i\}\ii\{X_{l,n}=i\}-1\right)^2\Big|X_{s,n}=t\right)\\
=&\ee\left(\left(\frac{1}{p_n(t)}\ii\{X_{s,n}=t\}\ii\{X_{l,n}=t\}-1\right)^2\Big|X_{s,n}=t\right)\\
=& \frac{1}{p_n(t)}\ii\{X_{s,n}=t\}-2\ii\{X_{s,n}=t\}+1\le \frac{1}{p_{min}},
\end{align*}
which implies that
$$
B^2\le \frac{n}{p_{min}}.
$$

For every $1\le k\le n$, assume that
$$
\ee\left(\sum_{s=2}^{k-1}a_s^2(X_{s,n})\right)\le 1\ \ \text{and}\ \  \ee\left(\sum_{l=1}^{k-2}b_l^2(X_{l,n})\right)\le 1,
$$
then, by using Cauchy-Schwarz inequality and Jensen's inequality, we have
\beq\label{p25}
\aligned
&\sum_{s=2}^{k-1}\sum_{l=1}^{s-1}(\ee a ^2_s(X_{s,n}))^{1/2}
\left(\ee b ^2_l(X_{l,n})\right)^{1/2}\\
\le&\sum_{s=2}^{k-1}(\ee a ^2_s(X_{s,n}))^{1/2}
\sum_{l=1}^{k-2}\left(\ee b ^2_l(X_{l,n})\right)^{1/2}\\
\le&n\left(\left(\sum_{s=2}^{k-1}\ee a_s^2(X_{s,n})\right)\cdot\left(\sum_{l=1}^{k-2}\ee b_l^2(X_{l,n})\right)\right)^{1/2}\le n.
\endaligned
\deq
Moreover, from the definition of $g_{s,l}(X_{s,n}, X_{l,n})$, it follows that
\begin{align*}
&g_{s,l}(X_{s,n}, X_{l,n})a_s(X_{s,n})b_l(X_{l,n})\\
=&\sum_{i=1}^{m_n} \frac{1}{p_n(i)}\ii\left\{X_{s, n}=i\right\}\ii\left\{X_{l, n}=i\right\}a_s(X_{s,n})b_l(X_{l,n})
-a_s(X_{s,n})b_l(X_{l,n}).
\end{align*}
By using Cauchy-Schwarz inequality and (\ref{p25}), we have
\begin{align*}
&\left|\ee\left(\sum_{s=2}^{k-1}\sum_{l=1}^{s-1}\sum_{i=1}^{m_n} \frac{1}{p_n(i)}\ii\left\{X_{s, n}=i\right\}\ii\left\{X_{l, n}=i\right\}a_s(X_{s,n})b_l(X_{l,n})
\right)\right|\\
=&\left|\sum_{s=2}^{k-1}\sum_{l=1}^{s-1}\sum_{i=1}^{m_n} \frac{1}{p_n(i)}\ee\left(\ii\left\{X_{s, n}=i\right\}a_s(X_{s,n})\right)\ee\left(\ii\left\{X_{l, n}=i\right\}b_l(X_{l,n})
\right)\right|\\
\le&\sum_{s=2}^{k-1}\sum_{l=1}^{s-1}\left(\sum_{i=1}^{m_n} \frac{1}{p_n(i)}\Big[\ee\left(\ii\left\{X_{s, n}=i\right\}a_s(X_{s,n})\right)\Big]^2\right)^{1/2}\\
&\ \ \ \ \ \ \ \ \ \ \left(\sum_{i=1}^{m_n} \frac{1}{p_n(i)}\Big[\ee\left(\ii\left\{X_{l, n}=i\right\}b_l(X_{l,n})
\right)\Big]^2\right)^{1/2}\\
\le&\sum_{s=2}^{k-1}\sum_{l=1}^{s-1}\left(\sum_{i=1}^{m_n} \frac{1}{p_n(i)}\Big[(p_n(i))^{1/2}\left(\ee(a ^2_s(X_{s,n}))^{1/2}\right)\Big]^2\right)^{1/2}\\
&\ \ \ \ \ \ \ \ \ \ \left(\sum_{i=1}^{m_n}
\frac{1}{p_n(i)}\Big[(p_n(i))^{1/2}\left(\ee(b ^2_l(X_{l,n}))^{1/2}\right)\Big]^2\right)^{1/2}\\
\le&
\sum_{s=2}^{k-1}\sum_{l=1}^{s-1}(\ee a ^2_s(X_{s,n}))^{1/2}
\left(\ee b ^2_l(X_{l,n})\right)^{1/2}\le n
\end{align*}
and
$$
\aligned
&\left|\ee\left(\sum_{s=2}^{k-1}\sum_{l=1}^{s-1}a_s(X_{s,n})b_l(X_{l,n})\right)\right|\\
\le&\sum_{s=2}^{k-1}\sum_{l=1}^{s-1}(\ee a ^2_s(X_{s,n}))^{1/2}
\left(\ee b ^2_l(X_{l,n})\right)^{1/2}\le n,
\endaligned
$$
which implies
$$
D\le 2n.
$$
Let us define
\beq\label{delta}
\aligned
\Delta_n:&=2(1+\varepsilon)^{3/2}C\sqrt{u_n}+\eta(\varepsilon)Du_n+\beta(\varepsilon)Bu_n^{3/2}+\gamma(\varepsilon)Au_n^2\\
=&O\left(n\sqrt{m_n}\sqrt{u_n}+nu_n+\frac{\sqrt{n}}{\sqrt{p_{min}}}u_n^{3/2}+\frac{u_n^2}{p_{min}}\right).
\endaligned
\deq
From the condition $\dsp m_n/b_n^2\to\infty$, we can choose $l_n$, which is a sequence of positive numbers such that
$$
l_n\to\infty,\ \ \frac{m_n}{b_n^2l_n^3}\to\infty.
$$
By taking $u_n=b_n^2l_n$, we get
$$
\aligned
\Delta_n=&O\left(n\sqrt{m_n}b_n\sqrt{l_n}+nb_n^2l_n+\frac{\sqrt{n}}{\sqrt{p_{min}}}b_n^3l_n^{3/2}+\frac{b_n^4l_n^2}{p_{min}}\right).
\endaligned
$$
From the conditions in (\ref{con}) and noting the fact $p_{min}^{-1}\ge m_n$, it is easy to check
$$
\frac{nm_n}{n\sqrt{m_n}b_n\sqrt{l_n}}
=\left(\frac{m_n}{b_n^2l_n}\right)^{1/2}\to\infty,
$$
$$
\frac{nm_n}{nb_n^2l_n}
=\frac{m_n}{b_n^2l_n}
\to\infty,
$$
$$
\frac{nm_n\sqrt{p_{min}}}{\sqrt{n}b_n^3l_n^{3/2}}
=\frac{\sqrt{nm_n}p_{min}}{b_n^2}\frac{\sqrt{m_n}}{\sqrt{p_{min}}b_nl_n^{3/2}}\to\infty
$$
and
$$
\frac{nm_n p_{min}}{b_n^4l_n^2}=\frac{nm_n p_{min}^2}{b_n^4}\frac{1}{p_{min}l_n^2}\ge \frac{nm_n p_{min}^2}{b_n^4}\frac{m_n}{l_n^2}\to\infty
$$
which implies
$$
\aligned
\Delta_n=o\left(nm_n\right).
\endaligned
$$
Therefore, by using Lemma \ref{lem-hr}, for any $\varepsilon>0$, we have
\begin{align*}
&\lim_{n\to\infty}\frac{1}{b_n^2}\log \pp\left(\frac{2}{n^2m_n}\left|\sum_{k=2}^n\sum_{s=2 }^{k-1}\sum_{l=1}^{s-1}\left(\sum_{i=1}^{m_n}
\frac{1}{p_n(i)}\ii\{X_{s,n}=i\}\ii\{X_{l,n}=i\}-1\right)
\right|>\varepsilon\right)\\
=&\lim_{n\to\infty}\frac{1}{b_n^2}\log \sum_{k=2}^n \pp\left(\left|U_k\right|>\frac{nm_n\varepsilon}{2}\right)\\
\le &\lim_{n\to\infty}\frac{1}{b_n^2}\log \sum_{k=2}^n\pp\left(\left|U_k\right|>\Delta_n\right)\\
\le &-\lim_{n\to\infty}\frac{u_n-\log n}{b_n^2}\to-\infty,
\end{align*}
which is the claim (\ref{p21}).

At last, for any $\varepsilon>0$, by the conditions in (\ref{con}), the bound in (\ref{bbn}) and Lemma \ref{H}, we have
\begin{align*}
&\lim_{n\to\infty}\frac{1}{b_n^2}\log\pp\left(\left|\frac{2}{n^2m_n}\sum_{k=2}^n\sum_{l=1}^{k-1}
\left(\sum_{i=1}^{m_n}\frac{1}{p_n(i)}\ii\{X_{l,n}=i\}-1\right)-1\right|>\varepsilon\right)
\\
\le &\lim_{n\to\infty}\frac{1}{b_n^2}\log\pp\left(\left|\frac{2}{n^2m_n}\sum_{k=2}^n\sum_{l=1}^{k-1}
\sum_{i=1}^{m_n}\left(\frac{1}{p_n(i)}\ii\{X_{l,n}=i\}-1\right)\right|+\frac{n+m_n-1}{nm_n}>\varepsilon\right)\\
\le &\lim_{n\to\infty}\frac{1}{b_n^2}\log \sum_{k=2}^n\pp\left(\left|\frac{2}{nm_n}\sum_{l=1}^{k-1}
\sum_{i=1}^{m_n}\left(\frac{1}{p_n(i)}\ii\{X_{l,n}=i\}-1\right)\right|>\varepsilon-\frac{n+m_n-1}{nm_n}\right)
\\
\le&\lim_{n\to\infty}\frac{1}{b_n^2}\log\sum_{k=2}^n \left( 2\exp\left(-\frac{\varepsilon^2}{8}nm_n^2p_{min}^2\right)\right)\\
=&-\lim_{n\to\infty}\frac{8^{-1}\varepsilon^2nm_n^2p_{min}^2+\log n}{b_n^2}=-\infty,
\end{align*}
which implies the claim (\ref{p22}).
\end{proof}


\begin{thebibliography}{99}


\bibitem{D} A. Dembo and O. Zeitouni, Large deviations techniques and applications. Second edition. Springer-Verlag, New York, 1998.

\bibitem{Ermakov98} M. S. Ermakov,
Asymptotic minimaxity of chi-squared tests.
Theory Probab. Appl. 42 (1998), no. 4, 589-610.

\bibitem{Gihman56} I. I. Gihman, On asymptotic properties of certain statistics similar to $\chi^2$. Teor. Veroyatnost. i Primenen. 1 (1956), 344-348.

\bibitem{G} L. Gy\"{o}rfi, Large deviations of $\chi^2$ divergence errors on partitions. Stochastic models, statistics and their applications, 3-10, Springer Proc. Math. Stat., 122, Springer, (2015).

\bibitem{H} W. Hoeffding, Probability inequalities for sums of bounded random variables. J. Amer. Statist. Assoc. 58 (1963), 13-30.

\bibitem{Holst72} L. Holst, Asymptotic normality and efficiency for certain goodness-of-fit tests. Biometrika 59 (1972), 137-145.

\bibitem{H-R} C. Houdr\'e, and P. Reynaud-Bouret,  Exponential inequalities, with constants, for $U$-statistics of order two. Stochastic inequalities and applications, 55-69, Progr. Probab., 56, Birkh\"auser, Basel, 2003.

\bibitem{Kruglov01} V. M. Kruglov,
Asymptotic behavior of the Pearson statistic.
Theory Probab. Appl. 45 (2001), no. 1, 69-92.

\bibitem{Morris75} C. Morris, Central limit theorems for multinomial sums.
Ann. Statist. 3 (1975), 165-188.


\bibitem{Pearson} K. Pearson, On the criterion that a given system of deviations from the probable in the case of a correlated system of variables is such that it can be reasonably supposed to have arisen from random sampling. Lond. Edinb. Dublin Philos. Mag. J. Sci. 50  (1900), no. 302, 157-175.

\bibitem{Puh} A. A. Puhalskii, Large deviations of semimartingales: a maxingale problem approach. I. Limits as solutions to a maxingale problem. Stochastics Stochastics Rep. 61 (1997), no. 3-4, 141-243.

\bibitem{R-W}  G. A. Rempa{\l}a and J. Weso{\l}owski, Double asymptotics for the chi-square statistic. Statist. Probab. Lett. 119 (2016), 317-325.

\bibitem{Tumanyan54} S. H. Tumanyan, On the asymptotic distribution of the $\chi^2$ criterion. Doklady Akad. Nauk SSSR (N.S.) 94 (1954), 1011-1012.

\bibitem{Tumanyan56} S. H. Tumanyan, Asymptotic distribution of $\chi^2$ criterion when the size of observations and the number of groups simultaneously increase. Teor. Veroyatnost. i Primenen. 1 (1956), 131-145.

\end{thebibliography}
\end{document}